\documentclass[10pt]{article}

\usepackage{amsmath}
\usepackage{amsthm}
\usepackage{amssymb}
\usepackage{latexsym}
\usepackage{pstricks}
\usepackage{graphicx, pst-plot, pst-node, pst-text, pst-tree}
\usepackage{titlefoot}
\usepackage{enumerate}
\usepackage{rotating}
\usepackage{titlesec}
\usepackage{cancel}
\usepackage[small,it]{caption}
\usepackage{setspace} 
\setlength{\captionmargin}{0.4in}

\usepackage{color}
\definecolor{darkgray}{rgb}{0.8, 0.8, 0.8}
\definecolor{darkgray}{rgb}{0.7, 0.7, 0.7}
\definecolor{darkblue}{rgb}{0, 0, .4}

\usepackage[bookmarks]{hyperref}
\hypersetup{
        colorlinks=true,
        linkcolor=darkblue,
        anchorcolor=darkblue,
        citecolor=darkblue,
        urlcolor=darkblue,
        pdfpagemode=UseThumbs,
        pdftitle={A stack and a pop stack in series},
        pdfsubject={Combinatorics},
        pdfauthor={Rebecca Smith and Vincent Vatter},
}

\newtheorem{theorem}{Theorem}[section]
\newtheorem{proposition}[theorem]{Proposition}

\newtheoremstyle{example}{\topsep}{\topsep}%
     {}
     {}
     {\bfseries}
     {.}
     {.5em}
     {\thmname{#1}\thmnumber{ #2}}
\theoremstyle{example}

\newtheoremstyle{negexample}{\topsep}{\topsep}%
     {}
     {}
     {\bfseries}
     {.}
     {.5em}
     {\thmname{#1}\thmnumber{ #2}}
\theoremstyle{negexample}

\newcounter{todocounter}

\long\def\symbolfootnote[#1]#2{\begingroup%
\def\thefootnote{\fnsymbol{footnote}}\footnote[#1]{#2}\endgroup}

\setlength{\textwidth}{6in}
\setlength{\textheight}{8in}
\setlength{\topmargin}{0in}
\setlength{\headsep}{0.25in}
\setlength{\headheight}{0.25in}
\setlength{\oddsidemargin}{0.25pt}
\setlength{\evensidemargin}{0.25pt}
\makeatletter
\newcommand{\Rm}[1]{\expandafter\@slowromancap\romannumeral #1@}
\newfont{\footsc}{cmcsc10 at 8truept}
\newfont{\footbf}{cmbx10 at 8truept}
\newfont{\footrm}{cmr10 at 10truept}
\pagestyle{plain}

\renewenvironment{abstract}%
                {
                  \begin{list}{}%
                     {\setlength{\rightmargin}{1in}%
                      \setlength{\leftmargin}{1in}}%
                   \item[]\ignorespaces\begin{small}}%
                 {\end{small}\unskip\end{list}}

\amssubj{05A17, 06A07}
\keywords{data structure, permutation pattern, stack, sorting}

\newpagestyle{main}[\small]{
        \headrule
        \sethead[\usepage][][]
        {\sc A Decreasing Stack and an Increasing Stack in Series}{}{\usepage}}

\title{\sc{A Decreasing Stack and an Increasing Stack in Series}}
\author{Rebecca Smith\footnotemark[\value{footnote}]\footnote{The author was partially supported by the NSA Young Investigator Grant H98230-08-1-0100.}
\\[-0.25ex]
\small Department of Mathematics\\[-1pt]
\small SUNY Brockport\\[-1pt]
\small Brockport, New York\\[15pt]}

\titleformat{\section}
        {\large\sc}
        {\thesection.}{1em}{}   

\date{}

\begin{document}
\maketitle

\pagestyle{main}

\newcommand{\s}{\mathbf{s}}
\newcommand{\m}{\mathbf{m}}
\renewcommand{\t}{\mathbf{t}}
\renewcommand{\b}{\mathbf{b}}
\newcommand{\f}{\mathbf{f}}
\newcommand{\rev}{\operatorname{rev}}
\newcommand{\dual}{\operatorname{dual}}
\newcommand{\C}{\mathcal{C}}
\newcommand{\Av}{\operatorname{Av}}

\newcommand{\inp}{\textsf{i}}
\newcommand{\tra}{\textsf{t}}
\newcommand{\out}{\textsf{o}}

%
%
%
%
%
%
%
%

\def\sdwys #1{\xHyphenate#1$\wholeString}
\def\xHyphenate#1#2\wholeString {\if#1$%
\else\say{\ensuremath{#1}}\hspace{2pt}%
\takeTheRest#2\ofTheString
\fi}
\def\takeTheRest#1\ofTheString\fi
{\fi \xHyphenate#1\wholeString}
\def\say#1{\begin{turn}{-90}\ensuremath{#1}\end{turn}}

\newenvironment{twostacks}
{
	\begin{footnotesize}
	\psset{xunit=0.0355in, yunit=0.0355in, linewidth=0.02in}
	\begin{pspicture}(0,0)(35,20)
	\psline{c-c}(0,15)(10,15)(10,2)(15,2)(15,15)(20,15)(20,2)(25,2)(25,15)(35,15)
	\rput[l](-0.5,12.5){\mbox{output}}
	\rput[r](35,12.5){\mbox{input}}
}
{
	\end{pspicture}
	\end{footnotesize}
}

\newcommand{\fillstack}[4]{%
	\rput[l](0,17.5){\ensuremath{#1}}
	\rput[c](12.6, 8.5){\begin{sideways}{\sdwys{#2}}\end{sideways}}
	\rput[c](22.6, 8.5){\begin{sideways}{\sdwys{#3}}\end{sideways}}
	\rput[r](35,17.5){\ensuremath{#4}}
}

\newcommand{\si}{%
	\psline[linecolor=darkgray]{c->}(27, 17.5)(22.5, 17.5)(22.5, 14)
}
\newcommand{\st}{%
	\psline[linecolor=darkgray]{c->}(22.5, 14)(22.5, 17.5)(12.5, 17.5)(12.5, 14)
}
\newcommand{\so}{%
	\psline[linecolor=darkgray]{c->}(12.5, 14)(12.5, 17.5)(8, 17.5)
}

\begin{abstract}
We study a sorting machine consisting of two stacks in series where the first stack has the added restriction such that entries in the stack must be in decreasing order from top to bottom.  We give the basis of the class of permutations that are sortable by this machine which shows that it is enumerated by the Schr\"oder numbers.
\end{abstract}

\section{Introduction}

A stack is a sorting device that works by a sequence of push and pop operations.  This last-in, first-out machine was shown by Knuth~\cite{knuth:the-art-of-comp:1} to sort a permutation if and only if that permutation avoids the pattern $231$.  That is, if there are not three indices $i<j<k$ with $\pi(k)<\pi(i)<\pi(j)$, then it is possible to run $\pi$ through a stack and output the identity permutation.  The class of stack-sortable permutations is enumerated by the Catalan numbers.

In the language of permutation patterns, any downset of permutations in the permutation containment ordering is a \emph{class}, and every class has a \emph{basis}, which consists of the minimal permutations not in the class.  Especially given that the basis for the class of permutations sortable by one stack contains only a single pattern of length three, considering two stacks in series is quite natural.  However, the problem becomes rather unwieldy.  In the case of two stacks in series, Murphy~\cite{murphy:restricted-perm:} showed that class of sortable permutations has an infinite basis.  The enumeration of this class also appears to be difficult.  The best known bounds are given by Albert, Atkinson, and Linton~\cite{albert:permutations-ge:}.

To get a better handle on this problem, many have considered different types of weaker sorting machines.  One such weaker machine is a stack in which the entries must increase when read from top to bottom.  Atkinson, Murphy, and Ru\v{s}kuc~\cite{atkinson:sorting-with-tw:} found an optimal result for two increasing stacks in series.  (Note that to obtain the identity permutation, the last stack will be an increasing stack even without declaring this restriction.)  Interestingly enough, this basis was still infinite but the permutation class was found to be in bijection with the permutations that avoid $1342$ as enumerated by B\'ona~\cite{bona:exact-enumerati:}.  Both enumerations were found by using a bijection with $\beta(0,1)$ trees.  

One can analogously define a \emph{decreasing stack} as a stack in which the entries must decrease when read from top to bottom.  In this paper, we study sorting with a decreasing stack followed by an increasing stack, a machine we call DI (and refer to the decreasing stack as D and the increasing stack as I).  Our main result shows that the class of DI-sortable permutations has a finite basis, $\{3142, 3241\}$.  Kremer~\cite{kremer:permutations-wi:,kremer:postscript:-per:} has shown previously that this class is enumerated by the large Schr\"oder numbers.

\section{The class of DI-sortable permutations}

We first illustrate how a permutation can be sorted with the DI sorting machine in Figure~\ref{DI_example}.  Notice that this permutation contains the pattern $2341$ and as such cannot be sorted by two increasing stacks in series.

\begin{figure}[t]
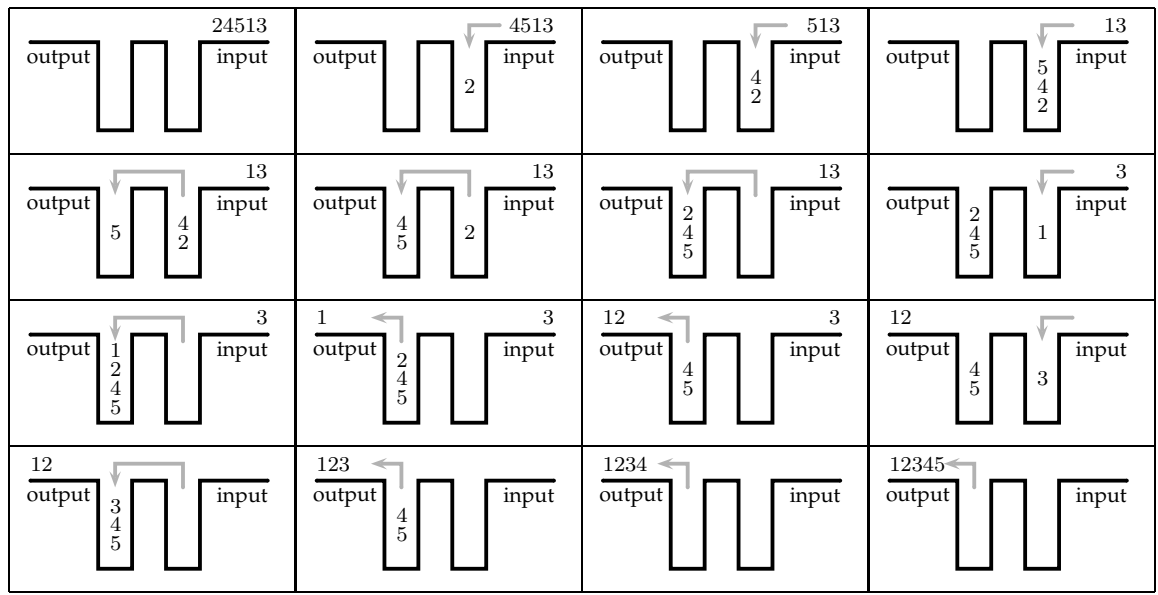

\begin{center}

\begin{tabular}{|c|c|c|c|}
\hline
\begin{twostacks}
\fillstack{}{}{}{24513}
\end{twostacks}
&
\begin{twostacks}
\fillstack{}{}{2}{4513}
\si
\end{twostacks}
&
\begin{twostacks}
\fillstack{}{}{24}{513}
\si
\end{twostacks}
&
\begin{twostacks}
\fillstack{}{}{245}{13}
\si
\end{twostacks}
\\\hline
\begin{twostacks}
\fillstack{}{5}{24}{13}
\st
\end{twostacks}
&
\begin{twostacks}
\fillstack{}{54}{2}{13}
\st
\end{twostacks}
&
\begin{twostacks}
\fillstack{}{542}{}{13}
\st
\end{twostacks}
&
\begin{twostacks}
\fillstack{}{542}{1}{3}
\si
\end{twostacks}
\\\hline
\begin{twostacks}
\fillstack{}{5421}{}{3}
\st
\end{twostacks}
&
\begin{twostacks}
\fillstack{1}{542}{}{3}
\so
\end{twostacks}
&
\begin{twostacks}
\fillstack{12}{54}{}{3}
\so
\end{twostacks}
&
\begin{twostacks}
\fillstack{12}{54}{3}{}
\si
\end{twostacks}
\\\hline
\begin{twostacks}
\fillstack{12}{543}{}{}
\st
\end{twostacks}
&
\begin{twostacks}
\fillstack{123}{54}{}{}
\so
\end{twostacks}
&
\begin{twostacks}
\fillstack{1234}{}{}{}
\so
\end{twostacks}
&
\begin{twostacks}
\fillstack{12345}{}{}{}
\so
\end{twostacks}
\\\hline
\end{tabular}

\caption{Sorting the permutation $24513$.}
\label{DI_example}
\end{center}
\end{figure}

\begin{proposition}~\label{bad_patterns}  
The permutations 3142 and 3241 are not DI-sortable.
\end{proposition}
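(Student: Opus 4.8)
The plan is to argue by contradiction, tracing the (nearly forced) sequence of legal operations and reaching a configuration in which no further operation is possible although unread input remains. It is convenient to handle both permutations at once by writing $\pi = 3\,a\,4\,b$, where $\{a,b\}=\{1,2\}$. I will use the following elementary facts about the machine: the only way to consume an input entry is to push it onto D; such a push is legal only if the incoming entry is larger than the current top of D (or D is empty), since D must remain decreasing from top to bottom; transferring the top entry of D onto I is legal only if that entry is smaller than the current top of I (or I is empty), since I must remain increasing from top to bottom; and, since the required output is $1\,2\,3\,4$, entries must be output in increasing order, so in particular $3$ cannot be output before $1$ and $2$, and $4$ cannot be output before $1$, $2$, and $3$.

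Assume $\pi = 3\,a\,4\,b$ is DI-sortable. First, $3$ must be pushed onto D. The second entry $a\in\{1,2\}$ is smaller than $3$ and so cannot be pushed onto D on top of $3$; as $3$ also cannot yet be output, the entry $3$ must be transferred into I before $a$ is read. Next $a$ is pushed onto D (possibly followed by further legal operations, e.g.\ transferring $a$ into I, which is legal since $a<3$), and then the third entry $4$ is read and pushed onto D. The crucial observation is that at this moment I is nonempty and all of its entries are at most $3$: it still contains $3$, which cannot have been output because that would require $1$ and $2$ to have been output already, whereas $b$ has not even been read.

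It now suffices to check that the machine is deadlocked. The entry $4$ lies in D and can never leave it: it cannot be transferred into I (that would need the top of I to exceed $4$, which is impossible, and I never becomes empty, since $3$ remains trapped there until $b$ is read and output), and it cannot be output ahead of $1$, $2$, and $3$. Since $4$ is the largest entry, nothing can ever be pushed onto D above it, so $4$ stays at the top of D; but the final entry $b\in\{1,2\}$ is smaller than $4$ and therefore can never be pushed onto D. Thus $b$ is never consumed, contradicting the assumption that $\pi$ is sorted.

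I expect the only point requiring care to be the verification that $3$ really does stay trapped in I---equivalently, that I never empties after $4$ enters D---since this is what prevents $4$ from escaping and is the heart of the obstruction; it follows from the fact that the yet-unread entry $b$ is smaller in value than $3$ yet must be output before it. Everything else is a short, essentially forced, case analysis, and as a sanity check one can simply enumerate by hand the few reachable configurations for each of $3142$ and $3241$.
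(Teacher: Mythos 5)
Your proof is correct and follows essentially the same forced-move analysis as the paper's, tracing the machine to a deadlock in which $4$ sits irremovably atop D while $b\in\{1,2\}$ remains unread. The only difference is cosmetic: by writing both patterns as $3\,a\,4\,b$ and observing that $3$ stays trapped in I until $b$ is output, you handle the two cases at once and sidestep the paper's separate discussion of whether $4$ may be placed above $a$ in D.
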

\begin{proof}
First consider $3142$.  First one must input the $3$ into D.  As D is a decreasing stack, the $3$ must be moved to I before the $1$ enters D.  While allowed by the stack restrictions, placing the $4$ over the $1$ in D would force us to later put the $4$ above the $3$ in I, resulting in failure.  Hence, the $1$ must enter I before the $4$ enters D.  Whether the $1$ stays in I or goes to the output is of no consequence.  We still must have the $3$ in I, $4$ in D, and no good move.  The $3$ is not the next entry we wish to output.  The $4$ cannot move to I while the $3$ is in it since I is an increasing stack.  And the $2$ cannot move to D while the $4$ is in it since D is an decreasing stack.  Thus $3142$ is not sortable by DI.

Now consider $3241$.  As before, one must input the $3$ into D first and then the $3$ must be moved to I before the $2$ enters D.  Placing the $4$ over the $2$ in D would again result in having to put the $4$ above the $3$ in I.  Hence, the $2$ must enter I before the $4$ enters D.  And now we are stuck again.  The $2$ is not the next entry we want to output.  The $4$ cannot move to I while the $2$ and $3$ are in it since I is an increasing stack.  And the $1$ cannot move to D while the $4$ is in it since D is an decreasing stack.  Hence $3241$ is not sortable by DI either.
\end{proof}

\section{Algorithm}

We now give an algorithm for sorting permutations with our machine DI.    Consider the permutation that we wish to sort to be the input and on the right of the machine as is standard.

\begin{enumerate}
\item{If the top entry of the increasing stack I is the next entry of the output, then push the entry to the output.}
\item{If all of the $m$ entries in the decreasing stack D make up the next $m$ entries of the output, then push those entries to I.}
\item{Otherwise, if the next entry of the input is smaller than the top entry of I and larger than the top entry of D, then push it onto D.}
\item{Finally if neither of those moves are available, then push the entry from D to I.}
\end{enumerate}

Note that the second step of the algorithm is not actually necessary in terms of making it optimal.  However, it does get entries to the output more quickly.

Using our algorithm, we now show that the set of forbidden patterns $\{3142,3241\}$ classify our sortable permutations. 

\begin{proposition}  The basis for the class of permutation sortable by DI using the given algorithm is $\{3142, 3241\}$.
\end{proposition}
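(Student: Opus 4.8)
The plan is to show that the set $\C$ of permutations sorted by this (deterministic) algorithm is exactly $\Av(3142,3241)$; it will then follow that $\C$ is a class whose basis is $\{3142,3241\}$, since neither of those two permutations contains the other and every proper pattern of either has length at most three, hence avoids both and so belongs to $\C$, while $\Av(3142,3241)$ plainly has no other basis elements. One inclusion is easy: if $\pi$ is DI-sortable then so is every pattern of $\pi$, because deleting entries from a successful run of the machine leaves a valid run on the retained entries; so by Proposition~\ref{bad_patterns} no permutation containing $3142$ or $3241$ is DI-sortable, and in particular the algorithm --- being one way of running the machine --- fails on every such permutation. Hence $\C\subseteq\Av(3142,3241)$.

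The content is the reverse inclusion: the algorithm sorts every $\pi\in\Av(3142,3241)$. Since each entry travels only along input $\to$ D $\to$ I $\to$ output, the algorithm makes at most $3|\pi|$ moves and therefore halts; if it halts with the input or a stack nonempty, it is stuck. First I would record a simple invariant on the configuration that is preserved by all four steps: whenever I is nonempty, its top entry (that is, its minimum) is larger than every entry of D. A routine check over the four rules confirms this is maintained, and it guarantees that every push the algorithm performs --- pushing an input entry onto D in step~3, and the transfers D $\to$ I in steps~2 and~4 --- is legal for the decreasing stack D and the increasing stack I. Granting this, unwinding the priority order of the four rules shows that the algorithm can be stuck only in one configuration type: D is empty, I is nonempty with top entry $w$, the next value $v$ to be output satisfies $v<w$ (so step~1 fails and $v$ has not yet been read), and the next unread entry $x$ satisfies $x>w$ (so step~3 fails), while the emptiness of D disables steps~2 and~4.

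Ruling out this stuck configuration when $\pi$ avoids $3142$ and $3241$ is the crux, and the step I expect to be the main obstacle. In such a configuration $w$ has been read whereas $x$ and then $v$ are still to come, so $\pi$ contains an occurrence $w,\dots,x,\dots,v$ with $v<w<x$ in value; moreover the entries that have been read but not yet output and are smaller than $w$ are exactly $1,\dots,v-1$, since the remaining contents of I all exceed $w$. If some entry $u$ smaller than $w$ occurs in $\pi$ strictly between the $w$ and the $x$, then $u$ has been read --- everything between those two positions has --- so $u\in\{1,\dots,v-1\}$, and then $w,u,x,v$ is a $3142$, contradicting $\pi\in\Av(3142,3241)$. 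It remains to exclude the stuck configuration in the complementary sub-case, that every one of $1,\dots,v-1$ lies before $w$ in $\pi$; here one needs a sharper invariant, tracking how the entries currently occupying I were deposited there relative to the positions of the still-unread entries, strong enough to forbid this sub-case outright --- and this sub-case must be where the avoidance of $3241$ (rather than merely $3142$) is exploited, since otherwise the argument would establish $\Av(3142)\subseteq\C$, which is false. Pinning down that invariant and verifying it is preserved by each of the four steps is the technical heart of the proof. Once it is in place, the algorithm is never stuck on a permutation in $\Av(3142,3241)$, so it empties the input and both stacks; and since it outputs an entry only when that entry is the one next needed, the output is the identity. Hence $\Av(3142,3241)\subseteq\C$, which completes the proof.
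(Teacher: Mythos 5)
Your framework is sound and begins along the same lines as the paper's proof: you correctly isolate the unique stuck configuration of the algorithm (D empty, I topped by an entry $w$ that is not the next value $v$ to be output, next unread entry $x>w$), and your treatment of the first sub-case is essentially the paper's $3142$ case --- an entry $u<w$ lying between $w$ and $x$ in $\pi$ must already have been output (it is not in I, whose other entries all exceed $w$, and D is empty), hence $u<v$, and $w,u,x,v$ is a $3142$. (Minor slip: such a $u$ has been output, not ``read but not yet output''; the conclusion $u\le v-1$ still stands.) However, the proof is not complete: you explicitly defer the complementary sub-case to an unspecified ``sharper invariant,'' and that sub-case is precisely where the $3241$ occurrence must be produced. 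As written this is a genuine gap, not a routine verification.

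The missing idea --- which is how the paper closes the argument --- is to reason backwards about \emph{why} $w$ was transferred from D to I, rather than forwards from the final configuration. That transfer occurs only when steps 1--3 all fail, so at that moment the next input entry (which may be taken to be $x$) could not be pushed onto D. Either it was blocked because a still-unread entry $a<w$ had to get past $w$ first, in which case $a$ must be output before $x$ is read and $w\,a\,x\,v$ is a $3142$; or $w$ was evicted because some entry $h$ with $w<h<x$ sat in D above $w$ and had to be pushed to I first. In the latter case, if $h$ occurs after $w$ in $\pi$ one repeats the argument with $h$ in place of $w$ (some still larger entry $h_2<x$ must have evicted $h$), and the iteration terminates at an entry $h$ preceding $w$ in $\pi$, yielding the $3241$ occurrence $h\,w\,x\,v$ with $v<w<h<x$. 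This history-based case analysis is what stands in for the invariant you were hoping to find; without it (or an equivalent argument) the inclusion $\Av(3142,3241)\subseteq\C$ is not established.
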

\begin{proof}
Suppose $\pi$ is a permutation not sortable by DI.  Consider the point at which the algorithm fails.  Then there must be an entry $c$ atop the increasing stack I that is not the next entry to be output.  The decreasing stack D will be empty as those entries move from D to I as part of the algorithm. Also, the first of the remaining input entries, say $d$, must be larger than the entry $c$.  Finally, the smallest entry $b$ that we were not able to successfully output must appear later in the input since any smaller entries in D can always move to I and then be output at their turn.  Clearly, the entry $c$ appears before $d$ in the permutation $\pi$.

At the point when $c$ moved from D to I, we know that every entry in I must have been larger than $c$.  Also, moving $d$ atop $c$ (or any other entry smaller than $d$) in D is  preferred by the algorithm if $d$ is also smaller than the top entry of I, so there must have been a situation that forced $c$ to be moved to I before $d$ could enter.  

At this point of moving when $c$ moved from D to I, it could have been the case that the top entry $h$ of D was smaller than the first available entry of the input, but larger than $c$.   Note that this first entry of the input can be assumed to be $d$, because moving $c$ to the output does not allow for this large entry to enter D either.   Also note that if $h$ appeared after $c$ in $\pi$, there would have been another entry $c<h<h_2<d$ that forced $h$ from D to I.  Repeat this process until we find one that did appear before $c$ in $\pi$.   Hence we have the pattern $hcdb$, that is $3241$ in $\pi$.

The other possibility is that there could have been an entry $a<c$ between $c$ and $d$ in $\pi$ that could not enter D until $c$ was pushed to I.  In this case, this entry $a$ must later move to I itself, and since $c$ is atop I at the time $d$ is at the front of the input, $a$ must have been output.  Hence we have the pattern $cadb$, that is $3142$ in $\pi$.
\end{proof}

\begin{proposition}
The given algorithm is optimal for sorting permutations with the DI machine.
\end{proposition}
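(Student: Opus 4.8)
The plan is to show that the algorithm fails on a permutation $\pi$ only when $\pi$ contains $3142$ or $3241$, which, combined with Proposition~\ref{bad_patterns}, means the algorithm sorts exactly the basis-avoiding class — and this class is precisely the set of DI-sortable permutations in the information-theoretic sense (what any correct sorting procedure could achieve). In other words, optimality here means: if a permutation is DI-sortable by \emph{some} sequence of legal stack moves, then our particular algorithm also sorts it. Since the previous proposition already establishes that the algorithm fails exactly on permutations containing $3142$ or $3241$, it suffices to prove the converse of Proposition~\ref{bad_patterns} in the strong form: \emph{no} DI sorting procedure can sort a permutation containing $3142$ or $3241$. Then the class actually sorted by the algorithm coincides with the class sortable by any means, so the algorithm is optimal.

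**First I would** make precise what "optimal" means and reduce to the pattern statement. The class $\Av(3142,3241)$ is closed downward under pattern containment, so if we show every permutation containing one of these patterns is un-sortable by \emph{any} DI procedure, and the algorithm sorts everything in $\Av(3142,3241)$ (immediate from the previous proposition, since its proof shows the algorithm only fails when one of these patterns is present), then the algorithm sorts a permutation if and only if it is DI-sortable at all. **The second step** is the heart: given $\pi$ containing, say, $3241$ at positions $i<j<k<\ell$ with values $\pi(i)=\bar3,\pi(j)=\bar2,\pi(k)=\bar4,\pi(\ell)=\bar1$ (using bars to denote the relative order of these four entries), I would argue that the presence of entries of arbitrary other values interspersed cannot rescue the situation. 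The key observation is that Proposition~\ref{bad_patterns} analyzes the forced moves for the four entries in isolation; I need to check that additional entries only add constraints, never relax them. Concretely: the entry $\bar3$ must leave D before $\bar2$ can enter D (D is decreasing and $\bar3<$ well, $\bar3>\bar2$ so actually $\bar3$ sits fine below $\bar2$ — re-examine: in $3241$, $\bar3$ enters first; when $\bar2$ arrives, $\bar2<\bar3$ so $\bar2$ can legally sit atop $\bar3$ in D; but then $\bar4$ arriving later is larger than both and must force a spill). I would follow the case analysis of Proposition~\ref{bad_patterns} verbatim, noting at each "forced" step that the forcing argument only uses the relative order of the four marked entries, never the absence of others, and that any intervening entry either is irrelevant to these four cells or imposes a further obstruction.

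**The main obstacle** will be ruling out clever use of intervening entries — one must verify that an entry $x$ with value between, say, $\bar2$ and $\bar4$, appearing between positions $j$ and $k$, cannot be parked somewhere to change which of $\bar3,\bar4$ is trapped in which stack. The cleanest way is probably a stack-content invariant argument: track, at the moment $\bar4$ is at the front of the input, the contents of D and I restricted to $\{\bar1,\bar2,\bar3\}$, and show that in every reachable configuration either $\bar3$ is in I (blocking $\bar4$ from I) with $\bar2$ or $\bar1$ not yet output, or $\bar2$ is below something in D blocking $\bar1$ — reproducing exactly the deadlock of Proposition~\ref{bad_patterns}. The $3142$ case is symmetric and slightly easier since $\bar1<\bar3$ forces $\bar1$ out of D's way early. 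I would present this as two short paragraphs mirroring the two paragraphs of Proposition~\ref{bad_patterns}'s proof, with an added remark that intervening entries are harmless, and conclude that since the algorithm sorts precisely $\Av(3142,3241)$ and nothing outside it is DI-sortable, the algorithm is optimal.
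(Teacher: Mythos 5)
Your overall logic matches the paper's: the previous proposition shows the algorithm fails only on permutations containing $3142$ or $3241$, and Proposition~\ref{bad_patterns} shows those patterns are not DI-sortable, so a permutation on which the algorithm fails cannot be sorted by any sequence of legal moves. Where you diverge is in how much work you assign to the step ``containing a bad pattern implies unsortable.'' You propose to redo the deadlock analysis of Proposition~\ref{bad_patterns} inside an arbitrary host permutation, tracking how intervening entries interact with the four marked ones. That is far more than is needed: any successful DI-sorting of $\pi$, restricted to the moves involving a chosen subset of entries, is a successful DI-sorting of the pattern those entries form (a subsequence of the contents of a decreasing stack is still decreasing, likewise for I, and the chosen entries are emitted in increasing order), so DI-sortability is closed under pattern containment. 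This is the standard ``sortable permutations form a class'' fact that the paper's introduction takes for granted, and it reduces your second step to a one-line citation of Proposition~\ref{bad_patterns}; intervening entries genuinely cannot help, and no invariant argument is required to see it.

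One concrete error in your sketch that you would need to fix if you did carry out the long route: in your parenthetical ``re-examination'' you conclude that since $\bar2<\bar3$, the entry $\bar2$ can legally sit atop $\bar3$ in D. It cannot. D's entries decrease from top to bottom, so the top of D is its largest element and a push onto D is legal only when the incoming entry exceeds the current top; this is precisely why the paper's proof of Proposition~\ref{bad_patterns} forces the $3$ out of D before the $2$ may enter. With the constraint reversed, your case analysis for $3241$ would not reproduce the actual deadlock.
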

\begin{proof}
Let $\pi$ be a permutation that is sortable by DI, but not when using this algorithm.  Then $\pi$ contains either a $3142$ or a $3241$ pattern, both which we have already shown to not be DI-sortable in Proposition~\ref{bad_patterns}.
\end{proof}

The previous two propositions give us the basis for the DI-sortable permutations.

\begin{theorem}  The basis for the class of permutations sortable by DI  is $\{3142, 3241\}$.
\end{theorem}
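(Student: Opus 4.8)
The plan is to assemble the three propositions already proved into the promised statement. Recall that the \emph{basis} of a permutation class $\C$ is the set of $\preceq$-minimal permutations not in $\C$, so the theorem amounts to two assertions: that the class of DI-sortable permutations is exactly $\Av(3142,3241)$, and that $3142$ and $3241$ are precisely the minimal forbidden patterns.

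First I would establish the equality of classes by chaining the last two propositions. The proposition on the algorithm gives that the permutations sortable by DI \emph{using the given algorithm} are exactly those avoiding $3142$ and $3241$. The optimality proposition gives that every DI-sortable permutation is sorted by the algorithm; together with the trivial observation that anything the algorithm sorts is DI-sortable, this shows the DI-sortable permutations coincide with those sortable by the algorithm. Hence the class of DI-sortable permutations equals $\Av(3142,3241)$; in particular it is automatically a genuine class (a downset under $\preceq$), since every avoidance set is.

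Then I would pin down the basis. Since the class is $\Av(3142,3241)$, its basis is contained in $\{3142,3241\}$ by definition, and by Proposition~\ref{bad_patterns} both $3142$ and $3241$ are themselves not DI-sortable, hence genuinely lie outside the class. Because $3142$ and $3241$ have the same length and are distinct, neither is a pattern of the other, so neither can be dropped; therefore the basis is exactly $\{3142,3241\}$.

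I do not expect a real obstacle: all the substantive work lives in the preceding propositions, and what remains is bookkeeping. The one point to state carefully is the logical flow of the equality $\{\text{DI-sortable}\} = \{\text{algorithm-sortable}\} = \Av(3142,3241)$, since the middle equality is exactly where optimality (and behind it Proposition~\ref{bad_patterns}) is used; once that chain is laid out, the identification of the basis is immediate.
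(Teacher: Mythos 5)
Your proposal is correct and matches the paper's approach: the paper likewise derives the theorem by combining the proposition that the algorithm sorts exactly $\Av(3142,3241)$ with the optimality proposition, stating simply that ``the previous two propositions give us the basis.'' Your write-up merely makes explicit the bookkeeping (the chain of equalities and the minimality of two distinct same-length patterns) that the paper leaves implicit.
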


\section{Concluding Remarks}~\label{concluding}

The DI-sortable permutations have previously been enumerated:

\begin{theorem}[Kremer~\cite{kremer:permutations-wi:,kremer:postscript:-per:}]
The number of DI-sortable permutations of length $n$ is equal to the $n-1^{\rm st}$ large Schr\"oder number.
\end{theorem}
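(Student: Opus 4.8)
The plan is to combine the structural result just established with a generating-function computation. By the preceding theorem the DI-sortable permutations of length $n$ are exactly the members of $\Av(3142,3241)$, so it suffices to prove that $|\Av_n(3142,3241)| = r_{n-1}$, where $r_0,r_1,r_2,\dots = 1,2,6,22,\dots$ are the large Schr\"oder numbers, with ordinary generating function $S(x)=\sum_{n\ge 0} r_n x^n = \frac{1-x-\sqrt{1-6x+x^2}}{2x}$. Writing $G(x)=\sum_{n\ge 1}|\Av_n(3142,3241)|\,x^n$ for the generating function of the class, the claim $|\Av_n|=r_{n-1}$ is equivalent to the identity $G(x)=x\,S(x)$, i.e.\ to showing that $G$ satisfies the quadratic $G^2-(1-x)G+x=0$, equivalently the Schr\"oder-type recursion $G = x + xG + G^2$. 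As a sanity check the values $|\Av_n|=1,2,6,22$ for $n=1,2,3,4$ (all permutations of length $\le 3$, and all but $3142$ and $3241$ themselves in length $4$) already match $r_0,r_1,r_2,r_3$.

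The heart of the proof is therefore to realise the recursion $G = x+xG+G^2$ combinatorially: to decompose each nonempty $\pi\in\Av(3142,3241)$ into a single point (contributing $x$), a point together with one recursively-structured block (contributing $xG$), or two such blocks (contributing $G^2$). A useful first observation is that both basis patterns are sum-indecomposable, so the class is closed under direct sum and every member factors uniquely as $\sigma_1\oplus\cdots\oplus\sigma_k$ into sum-indecomposable members, reducing the task to enumerating the sum-indecomposable permutations of the class. In contrast the class is \emph{not} closed under skew sum, since $3241=213\ominus 1$, so the skew structure of the indecomposables is tightly constrained, and it is this asymmetry that should produce the two Schr\"oder ``halves'' $xG$ and $G^2$. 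I would carry this out by analysing the position and role of the largest entry $n$, using the fact that in any forbidden occurrence $n$ must play the role of the ``$4$'', so the constraints it imposes are local and can be read off directly.

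The step I expect to be the main obstacle is controlling the forbidden patterns that interact with the chosen splitting point. If one naively writes $\pi=\alpha\,n\,\beta$ with $n$ the largest entry, then since $n$ can only play the role of the ``$4$'' in a forbidden occurrence, the occurrences using $n$ are easy to forbid---they force every inversion-top of $\alpha$ to lie below every entry of $\beta$. But this already couples the two factors through their value sets rather than merely asserting that $\alpha$ and $\beta$ separately avoid the patterns, and in addition there are forbidden occurrences drawing elements from both $\alpha$ and $\beta$ without using $n$ at all. Handling these cleanly will require either refining the decomposition until the constraints factor, or introducing a single catalytic variable recording the threshold value that governs the cross-terms, deriving a functional equation for the resulting bivariate series, and extracting $G$ by the kernel method. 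Either way one is led to $G^2-(1-x)G+x=0$; solving this quadratic with the branch vanishing at $x=0$ gives $G=\frac{1-x-\sqrt{1-6x+x^2}}{2}=x\,S(x)$, whence $|\Av_n(3142,3241)|=r_{n-1}$. As an alternative worth keeping in reserve, the separable permutations $\Av(2413,3142)$ are classically counted by $r_{n-1}$ and also avoid $3142$, so an explicit bijection between $\Av(3142,3241)$ and the separables (or directly with the Schr\"oder paths of semilength $n-1$) would yield the same conclusion, the construction of that bijection being the difficulty in that approach.
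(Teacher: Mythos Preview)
The paper does not prove this theorem; it is stated with attribution to Kremer and quoted as a known enumeration result. The paper's own contribution is the characterisation of the DI-sortable permutations as $\Av(3142,3241)$, after which Kremer's earlier work is invoked for the Schr\"oder count. So there is no proof in the paper to compare your attempt against.

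That said, what you have written is a plan rather than a proof. You correctly reduce the question to establishing $G=x+xG+G^2$ for $G(x)=\sum_{n\ge 1}|\Av_n(3142,3241)|\,x^n$, and your preliminary observations (sum-closure of the class, failure of skew-closure via $3241=213\ominus 1$, the role of the maximum as the ``4'' in any occurrence) are all sound. But the central step---producing a decomposition of an arbitrary $\pi\in\Av(3142,3241)$ that actually realises the three summands $x$, $xG$, $G^2$---is never carried out. You yourself identify the obstacle: splitting at the maximum as $\pi=\alpha\,n\,\beta$ couples $\alpha$ and $\beta$ through their value sets and through cross-occurrences not using $n$, and you list two possible remedies (refining the decomposition, or a kernel-method computation with a catalytic variable) without executing either. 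Until one of these is completed and shown to yield the functional equation, the argument has a gap precisely at its load-bearing point. The bijective alternative you mention at the end is in the same state: naming $\Av(2413,3142)$ as a target is reasonable, but no bijection is supplied.
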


Thus it would be of interest if there is a natural bijection between the actions taken by the machine when sorting DI-sortable permutations and one of the sets of combinatorial objects well-known to be counted by the large Schr\"oder numbers, such as Schr\"oder paths.

One possible approach would be to use the fact that the small Schr\"oder numbers are (starting at $n=1$) half of the large Schr\"oder numbers.  Also, the Schr\"oder paths that have a diagonal step on the main diagonal are in bijection with the small Schr\"oder numbers.  Similarly, the sum decomposable DI-sortable permutations are counted by the small Schr\"oder numbers.  (This can be shown using the fact that this class is closed under the direct sum operation.)  Could diagonal steps on the main diagonal be in some way correlated with points at which both of the stacks are empty during the sorting process?

Another problem would be to see what happens if more than one decreasing stack is put in series with an increasing stack, the DDI machine, or perhaps even the DIDI machine.  While multiple stacks in series are difficult to contend with, the decreasing restriction might make the problem more manageable.

\bigskip
\noindent{\bf Acknowledgments:} The author would like to thank Daniel Rose for his \LaTeX\  macros for drawing two stacks in series.

\bibliographystyle{acm}
\bibliography{../refs}

\end{document}